\begin{document}
\newtheorem{theo}{Theorem}
\newtheorem{exam}{Example}
\newtheorem{coro}{Corollary}
\newtheorem{defi}{Definition}
\newtheorem{prob}{Problem}
\newtheorem{lemm}{Lemma}
\newtheorem{prop}{Proposition}
\newtheorem{rem}{Remark}
\newtheorem{conj}{Conjecture}
\newtheorem{calc}{}
\newtheorem{proper}{Property}

\def\Z{\mathbb{Z}}                   
\def\Q{\mathbb{Q}}                   
\def\C{\mathbb{C}}                   
\def\N{\mathbb{N}}                   
\def\P{\mathbb{P}}                   
\def\uhp{{\mathbb H}}                
\def\A{\mathbb{A}}                   
\def\dR{{\rm dR}}                    
\def\F{{\cal F}}                     
\def\Sp{{\rm Sp}}                    
\def\Gm{\mathbb{G}_m}                 
\def\Ga{\mathbb{G}_a}                 
\def\Tr{{\rm Tr}}                      
\def\tr{{{\mathsf t}{\mathsf r}}}                 
\def\spec{{\rm Spec}}            
\def\ker{{\rm ker}}              
\def\GL{{\rm GL}}                
\def\ker{{\rm ker}}              
\def\coker{{\rm coker}}          
\def\im{{\rm Im}}               
\def\coim{{\rm Coim}}            
\def\T{{\sf T }}                    
\def\S{{\sf S }}                    
\def\V{{\sf V }}                    
\def\W{{\sf I  }}                
\def\X{{\sf X }}                 
\def\Y{{\sf Y }}                 
\def\Zc{{\sf Z }}                 
\def\Resi{{\rm Resi }}            
\def\U{{\cal U }}                 
\def\O{{\cal O }}                 

\begin{center}
{\LARGE\bf 
Periods of families of curves in threefolds
}
\\
\vspace{.25in} {\large {\sc Hossein Movasati}}
\footnote{
Instituto de Matem\'atica Pura e Aplicada, IMPA, Estrada Dona Castorina, 110, 22460-320, Rio de Janeiro, RJ, Brazil,
{\tt www.impa.br/$\sim$ hossein, hossein@impa.br}}
\end{center}

\begin{abstract}
Clemens' conjecture states that the the number of rational curve in a generic quintic threefold is finite. If it is false we prove that certain periods of rational curves in such a quintic threefold must vanish.
Our method is based on a generalization  of a proof of Max Noether's 
theorem using infinitesimal variation of Hodge structures and its reformulation in terms of integrals and Gauss-Manin connection. 
\end{abstract}
\section{Introduction}
Clemens in \cite[page 300]{Clemens1984} in his study of  Griffiths' Abel-Jacobi mapping,  conjectures that the number of rational curves of degree $d$ in a generic quintic threefold must be finite. The main motivation for this is a counting of equations and parameters of a rational curve inside a quintic and its verification for degree $1$ and $2$ curves by Sh. Katz. Since then the conjecture is proved for $d\leq 11$. For an overview and many related references see \cite{Cotterill2012, ClemensConj}.
In this note we discuss a possible generalization of a proof of Noether-Lefschetz theorem in the framework of families of curves in quintic threefolds. It states that for a  generic smooth surface $X$ in $\P ^3$, the only curves in $X$ are obtained by intersecting $X$ with another surface. Equivalently, the Picard number of  $X$ is equal to one. The infinitesimal variation of Hodge structures (IVHS) developed by Ph. Griffiths and his coauthors in \cite{CGGH1983} gives a rigorous proof of Noether-Lefschetz theorem. In \cite[Section 14.4]{ho13} we have rewritten this proof using integrals (periods). The main idea lies in the fact that the restriction of holomorphic two forms over a curve is identically zero, and hence its integration over the curve is zero. If we have a full family of such surfaces and curves then we can make derivation of such integrals with respect to the underlying parameters (this is the origin of Gauss-Manin connection and IVHS) and we conclude that  the integration of all elements of the primitive cohomology of $X$ over the curve is zero. This concludes the proof. In the present text we mimic this proof and this leads us to our main result.  
\begin{theo}
 \label{main2021}
 If Clemens' conjecture is false in degree $d$ then for a holomorphic family of degree $d$ rational curves $Z_s,\ s\in(\C,0)$ in a generic  quintic threefold  $X$ and any 
 $\omega\in F^2H^3_\dR(X)$ we have 
 \begin{equation}
 \label{3oct2021}
 \int_{Z_s}\frac{\omega}{ds}=0, 
 \end{equation}
 where $F^\bullet$ is the Hodge filtration of $H^3_\dR(X)$.
 \end{theo}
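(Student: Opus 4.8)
The plan is to read the left-hand side of (\ref{3oct2021}) as the $s$-derivative of a tube integral, and then to propagate a trivial vanishing from the top Hodge piece $F^3H^3_\dR(X)=H^{3,0}(X)$ one step down to $F^2$ by means of the Gauss--Manin connection, in exact analogy with the way the Noether--Lefschetz vanishing is propagated along the Hodge filtration. To fix the meaning of the left-hand side, let $v$ be the velocity of the family, that is the holomorphic section of the normal bundle $N_{Z_s/X}$ obtained by differentiating $s\mapsto Z_s$; it is a $(1,0)$-object along $Z_s$. Writing $\Gamma_s=\bigcup_{0\le t\le s}Z_t$ for the $3$-chain with $\partial\Gamma_s=Z_s-Z_0$, I interpret $\int_{Z_s}\frac{\omega}{ds}$ as $\frac{d}{ds}\int_{\Gamma_s}\omega$, which by the transport formula for the closed form $\omega$ equals $\int_{Z_s}\iota_v\omega$. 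Since $Z_s-Z_0$ is homologically trivial this is the derivative of the Abel--Jacobi normal function paired against $\omega$, well defined on classes in $F^2H^3_\dR(X)$, and the theorem becomes the assertion that $\int_{Z_s}\iota_v\omega=0$ for every $\omega\in F^2H^3_\dR(X)$.

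For the top piece $\omega\in F^3H^3_\dR(X)=H^{3,0}(X)$ this is immediate: $\iota_v\omega$ is then of type $(2,0)$, and a $(2,0)$-form restricts identically to zero on the complex curve $Z_s$, exactly as a holomorphic two-form restricts to zero on a curve in the surface case recalled in the Introduction. Hence $\int_{Z_s}\iota_v\omega=0$ on the nose, and this is the analogue of the elementary vanishing that starts the proof of Noether's theorem.

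To reach $F^2$, i.e.\ the piece $H^{2,1}(X)$ on which $\iota_v\omega$ is of type $(1,1)$ and need not vanish, I would put $X$ into a family. Since Clemens' conjecture fails the degree-$d$ rational curves move in positive-dimensional families, and I assume --- this is where the hypothesis is genuinely used --- that such a family persists as $X$ is deformed, giving a two-parameter family $Z_{s,r}\subset X_r$ over $(\C,0)\times(\C^{101},0)$ with $X_0=X$. Let $\tilde\omega_r\in H^{3,0}(X_r)$ be the holomorphic three-form, depending holomorphically on $r$. The previous step gives $\int_{Z_{s,r}}\iota_v\tilde\omega_r=0$ for all $(s,r)$; differentiating this identity in a moduli direction $\partial_r$ and isolating the term in which $\partial_r$ hits the class produces the Gauss--Manin derivative, so that
\begin{equation}
\int_{Z_s}\iota_v\bigl(\nabla_{\partial_r}\tilde\omega_r\bigr)=0.
\end{equation}
Because $h^{3,0}=1$, the differential of the period map $T_rB\to \mathrm{Hom}\bigl(H^{3,0},H^{2,1}\bigr)\cong H^{2,1}$ is an isomorphism, so the classes $\nabla_{\partial_r}\tilde\omega_r$ sweep out all of $H^{2,1}(X)$; together with the already-treated summand $F^3$ this gives $\int_{Z_s}\iota_v\omega=0$ for every $\omega\in F^2H^3_\dR(X)$, as required.

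The hard part is making the differentiation step honest. Expanding $\partial_r\int_{Z_{s,r}}\iota_v\tilde\omega_r$ produces, besides the desired Gauss--Manin term, correction terms coming from the motion of $Z_{s,r}$ and of $v$ as $r$ varies. The terms in which $\partial_r$ acts on $v$ again contract the $(3,0)$-form $\tilde\omega_r$ and die by the same $(2,0)$-on-a-curve mechanism; the delicate term is the domain-variation contribution $\int_{Z_s}\iota_w\,d\,\iota_v\tilde\omega_r$, whose $(1,1)$-part need not vanish pointwise. I expect it to vanish because $\iota_v\tilde\omega_r$ is a holomorphic $(2,0)$-form along the curve, so its $\bar\partial$, and hence the offending $(1,1)$-piece, is zero; pinning this down along the moving curve, together with the check that the naive $r$-derivative of the representing forms agrees with $\nabla_{\partial_r}$ up to terms integrating to zero over $Z_s$, is the main obstacle. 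Justifying the persistence of the curve family under deformation of $X$, which is where the falsity of Clemens' conjecture really enters, is the second point demanding care.
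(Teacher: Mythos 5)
Your overall strategy is the same as the paper's: start from the trivial vanishing on the top Hodge piece (a class in $F^3$ dies on the two-dimensional locus swept by the curves), use the failure of Clemens' conjecture to deform the curve family together with $X$ over all of the moduli of quintics, and push the vanishing from $F^3$ down to $F^2$ by differentiating in the moduli direction and invoking surjectivity of the infinitesimal variation $T_t\T\to\mathrm{Hom}(H^{3,0},H^{2,1})$. Both ingredients --- persistence of the family (surjectivity of $\pi:\tilde\W\to\T$, which is where the hypothesis enters) and the IVHS surjectivity (realized concretely via the residues $\omega_P=\Resi(P\Omega/F^2)$ for $P$ running over the degree-$5$ monomials) --- appear in the paper in exactly the roles you assign them.

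The gap is the one you flag yourself: the domain-variation term $\int_{Z_s}\iota_w\,d\,\iota_v\tilde\omega_r$ in your $r$-differentiation. Your proposed reason for its vanishing (``$\iota_v\tilde\omega_r$ is holomorphic along the curve, so its $\bar\partial$ vanishes'') does not work as stated: $d\,\iota_v\tilde\omega_r=L_v\tilde\omega_r$ is a full $3$-form on a neighbourhood of $Z_s$ depending on the chosen extension of $v$, and its contraction with $w$ restricted to $Z_s$ has a $(1,1)$-component with no pointwise reason to vanish; nothing in your setup controls it. The paper avoids this by reversing the order of the two differentiations. It first forms the $3$-chain $\delta_t$ joining $Z_{t,s_1}$ to $Z_{t,s_2}$ inside the surface swept by the curves, notes $\int_{\delta_t}\omega=0$ for $\omega$ generating $F^3H^3_\dR(X_t,Y_t)$ (Proposition \ref{10.09.2021Ethan}: a piece of the filtration containing only $i$-forms with $i>2$ pulls back to zero on a $2$-dimensional variety), and differentiates this identity in $t$ using Theorem \ref{7set2021bolsonaro}, a Stokes computation on the $4$-chain swept by $\delta_t$. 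There the only correction terms are $\int_{Z_{i,t}}(\omega|_{\Y}-d\alpha)/dt$ and $\frac{\partial}{\partial t}\int_{Z_{2,t}-Z_{1,t}}\alpha$, and these vanish because the \v{C}ech representative $(\omega^3,0)$ of the generator restricts to zero on $\Y$, so one may take $\alpha=0$. This yields $\int_{\delta_t}\omega_P=0$ for all $P\in\C[x]_5$, spanning $F^2$, and only then is the $s$-derivative taken (Proposition \ref{22s2021}) to obtain \eqref{3oct2021}. To complete your argument you would either have to carry out this reorganization or supply an honest proof that your correction term vanishes; as written the step is not justified. (Your other loose end, the persistence of the family under deformation of $X$, is Section \ref{Hilbert2021} of the paper; it requires the genericity of $X$ and the slicing of the ${\rm PSL}(2,\C)$-action, but is not a conceptual obstacle.)
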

 By an analytic  family of algebraic cycles $Z_s,s\in(\C,0)$ in $X$ we mean the following. We take an analytic variety $Z$,  holomorphic maps $f:Z\to (\C,0)$ and $g: Z\to X$  such that $f$ is proper and for all $s\in (\C,0)$, $g$ restricted to $Z_s:=f^{-1}(s)$ is an injection. 
 The Gelfand-Leray form $\frac{\omega}{ds}$ in our context is only defined for $\omega \in F^2H^3_\dR(X)=H^{30}\oplus H^{21}$, for which the pull-back of $\omega$ to the two dimensional variety $Z$ by $f$ can be divided by $df$.  
Similar to the case of Abel-Jacobi map, we also use the integration over the path of homology between two algebraic cycles. According to \cite{Clemens1984} this goes back at least to \cite[page 333]{Weil1946} which in turn must be inspired by Picard's intensive study of two dimensional integrals in \cite{Picard1900}. The proof of Theorem \ref{main2021} actually implies that the Abel-Jacobi map attached to $Z_s,s\in(\C,0)$ is identically zero, however, this is much weaker statement than the vanishing \ref{3oct2021}, see Section \ref{10jan2022}. The integration over algebraic cycles as in \eqref{3oct2021} appears first in Deligne's study of absolute Hodge cycles, see \cite{dmos}, and further applications of this in the study of Hodge loci has been initiated in \cite[Chapters 18,19]{ho13} \cite{Roberto}.  

The sketch of the proof of Theorem \ref{main2021} is as follows. If a generic quintic contains an infinite number of genus $g$ curves then  we have to rigorously define families of rational curves  which vary inside families of quintic threefolds. This is done in Section \ref{Hilbert2021} for rational curves. This is the only place which we consider $g=0$ and it is expected that Theorem \ref{main2021} is true for curves of any genus. For instance, in Section \ref{10jan2022} we show that for curves which are obtained by the intersection of $X$ with $\P^2\subset\P^4$ we have automatically \eqref{3oct2021}.  We then study integrals over homology paths between two curves and give a formula for its derivation, see Section \ref{gm2021}. Some of these integrals are identically zero. This is described in Section \ref{04.12.2021-1}. Derivating these integrals we get more vanishing integrals and this finishes the proof in Section \ref{3.10.2021}. In Section \ref{04.12.2021-2} we describe how one can compute integrals \eqref{3oct2021} and as an example we perform this computation for a well-known family of lines inside  Fermat quintic.  The condition $X$ to be generic in Theorem \ref{main2021} comes from our difficulties in Section \ref{Hilbert2021} to handle families of algebraic cycles. We expect that for any family of rational curves inside any smooth quintic threefold, \eqref{3oct2021} is not identically zero in $s$.  

I presented the main ideas of the present paper in an informal online seminar and I would like to thank the audience for their comments and questions. This includes Ethan Cotterill who introduced me to the recent preprint \cite{Mustata2021},  Roberto Villaflor with whom I had useful discussions regarding the Abel-Jacobi map, Felipe Ramos who pointed out the correct definition of Hodge filtration in relative de Rham cohomology in Section \ref{04.12.2021-1},   and  Younes Nikdelan, Jin Cao, Jorge Duque. Finally, many thanks go to P. Deligne who wrote two useful letters for the first draft of the present paper. These can be found in the author's webpage.

\section{Hilbert schemes}
\label{Hilbert2021}
In the following we will discuss Hilbert schemes. We will not use scheme structure of these objects; only the underlying analytic variety will be used. 
Let $\T$  be an open subset of a Hilbert scheme parametrizing smooth projective varieties $X\subset \P^N$ of dimension $2n+1$. 
Let also $\S$ be another component of a Hilbert scheme of projective varieties 
$Z\subset\P^{N}$ of dimension $n$. We consider the incidence scheme 
$$
\W:=\{(X,Z)\in \T\times\S | Z\subset X\}.
$$
 We fix an irreducible component $\tilde \W$ of $\W$, consider the projections $\pi: \W\to\T,\ \tilde\W\to\T$ and denote their  images by $\V$ and $\tilde\V$, respectively.  From now on we denote by $t$ a point in $\T$ and by $X_t$ the corresponding projective space. Our main example is $\T:=\P\left( \C[x_0,x_1,\ldots,x_{n+1}]_a\right)$  with $n=3,\ a=5$ and the following with $N=4$ which is not actually a Hilbert scheme but it will be enough for our purposes.  A rational curve is given by the image of a map  
$$
f:\P^1\to \P^N, [x:y]\mapsto [f_0(x,y):\cdots:f_N(x,y)],
$$
where $f_i(x,y)$ are homogeneous polynomials of degree $d$ in $x,y$. We may expect that 
the quotient 
\begin{equation}
\P\left(\C[x,y]_d^{N+1}-\C[x,y]_d\C^{N+1}\right)/{\rm PSL}(2,\C),
\end{equation}
exists as an algebraic variety over $\C$, where $\C[x,y]\C^{N+1}$ corresponds to those $f$ such that its image is a point.  We may also expect that the universal family over $\S$ exists. This is  
$$
\left(\P^1\times \P\left(\C[x,y]_d^{N+1}-\C[x,y]_d\C^{N+1}\right)\right)/{\rm PSL}(2,\C),
$$
where the action of $A\in {\rm PSL}(2,\C)$ in $\P ^1$ is given by the  inverse of $A$, and hence, we have well-defined map 
$$
F: {\sf P}\to \P^N, \ \ \ \ [x:y],(f_0,f_1,\ldots,f_N)\mapsto [f_0(x,y);f_1(x,y):\cdots: f_N(x,y)].
$$
We have a natural projection ${\sf P}\to \S$ such that its fibers are rational curves and $F$ restricted to these fibers restores the map $f$. 
E. Cotterill reminded me that the moduli space $\S$ has a natural stratification given by the geometric genus  of the rational curve. Each strata might be isomorphic to a   Hilbert scheme  of rational curves of degree $d$ 
 in $\P^{N}$. It might be clarifying to write down the details of all this. 
We avoid these details and simply  define: 
\begin{equation}
\label{11oct2021}
\S:=\P\left(\C[x,y]_d^{N+1}-\C[x,y]_d\C^{N+1}\right),\ 
{\sf P}:=\P^1\times \S.
\end{equation}
In this way we have an action of ${\rm PSL}(2,\C)$ on the incidence scheme $\W$ such that it leaves the fibers of $\W\to\T$ invariant. In this example it is known that $\W$ is irreducible for $d\leq 11$, however, for $d\geq 12$ it might have many irreducible components, see \cite{Cotterill2012} and the references therein.
\begin{figure}
\begin{center}
\includegraphics[width=0.5\textwidth]{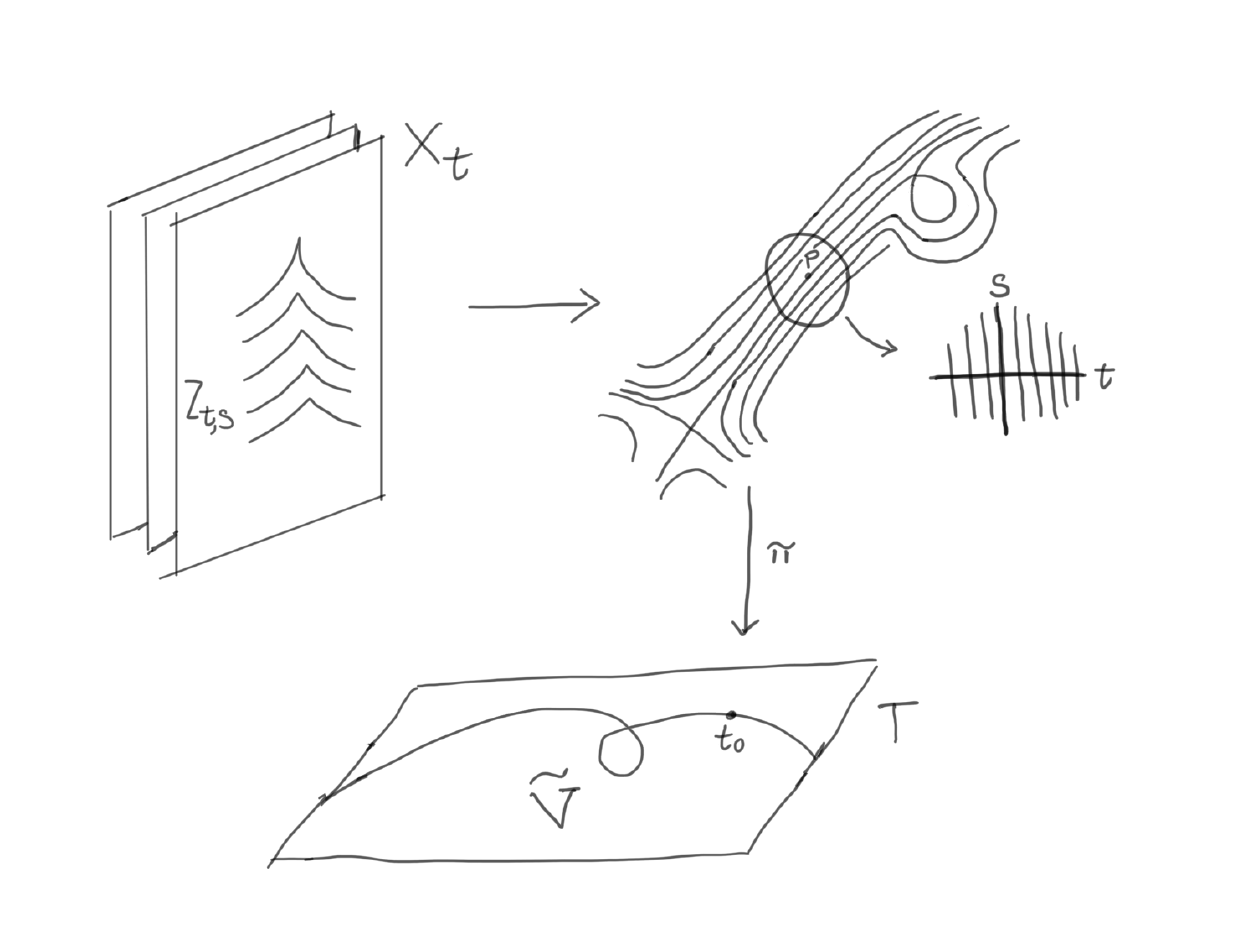}
\caption{Incidence scheme}
\label{dfd}
\end{center}
\end{figure}
We take  smooth points $t_0\in\tilde\V,\ p\in\tilde\W$  with $\pi(p)=t_0$ and $p$ is a regular point of $\pi$, that is, the derivative of $\pi$ at $p$ is surjective. This follows from:
 Let $\pi: (\C^n,0)\to (\C^m,0)$ be a surjective holomorphic map. Then the locus of points $p\in (\C^n,0)$ such that the derivative of $\pi$ at $p$ is not surjective, is a proper analytic subset of $(\C^n,0)$, and hence its complement is dense.   By implicit function theorem, we can take coordinate system $(t,s)\in (\C^n\times \C^m,0)\cong (\tilde\W,p)$ and $t\in (\C^n,0)\cong\V$ such that $\pi$ is just projection in the $t$ coordinate, see Figure \ref{dfd}.  We get a family $(X_{t}, Z_{t,s})$ such that $Z_{t,s}\subset X_t$. 
 \begin{rem}\rm
 \label{09.12.2021}
In the context of Hilbert schemes this family is flat. For simplicity, we start with a pair $(X,Z)$ such that both $X$ and $Z$ are irreducible, and hence, we can assume that $X_{t}$ and $Z_{t,s}$ are irreducible. 
In our main example \ref{11oct2021}, since we have also the action of ${\rm PSL}(2,\C)$ on the fibers of $\tilde\W\to \T$ and this action has only discrete (dimension zero) stabilizers, we replace $(\C^m,0)$ with some linear subspace $(\C^{m-3},0)$ which intersects the orbits of ${\rm PSL}(2,\C)$ in discrete sets. 
\end{rem}
If $\V$ is a proper subset of $\T$ ($\pi$ is not dominant) then a generic $X$ does not contain any algebraic cycle $Z$. 
  If $\T=\tilde\V$ and $m=0$ for all components $\tilde\W$ of $\W$ then we have a finite number of $Z$ inside a generic $X$.
  Note that $n=\dim(\tilde\V)$ and $n+m$ is the dimension of $\tilde\W$.
From now on we consider the case $m>0$. For our main example this means that we assume that Clemens' conjecture does not hold. Our main goal in this section is to describe instances such that the following property holds:
\begin{proper}
\label{F.France2021}
We have 
\begin{enumerate}
 \item 
 The homology classes of $Z_{t, s_1},Z_{t, s_2}$ in $H_*(X_t,\Z)$ for $s_1,s_2\in(\C^m,0)$ and $t\in (\C^n,0)$ are the same. The homology path $\delta_t$ between $Z_{t, s_1}$ and $Z_{t, s_2}$ is supported in the family of algebraic cycles $\cup_{s\in (\C^m,0)} Z_{t,s}\subset X_t$. 
 \item
 For $s$ fixed and $t_1,t_2\in (\C^n,0)$, the homology class of $Z_{t_1,s}$ is mapped to the homology class of $Z_{t_2,s}$ under the monodromy map $H_*(X_{t_1},\Z)\to H_*(X_{t_2},\Z)$. 
\end{enumerate}
\end{proper}
Even though the varieties $X_t$ are $C^\infty$ isomorphic, the algebraic cycles $Z_{t_1,s}, Z_{t_2,s}$ might not be isomorphic topologically. For instance, $Z_{t_1,s}$ might be a curve with only one nodal singularity and $Z_{t_2,s}$ a curve with only one cuspidal singularity. 
\begin{prop}
\label{30sept2021toulouse}
If $\S$ parameterizes rational curves of degree $d$ as in \eqref{11oct2021} then 
Property \ref{F.France2021} holds.   
\end{prop}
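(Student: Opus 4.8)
The plan is to derive both clauses of Property~\ref{F.France2021} from a single principle: a holomorphic family of algebraic cycles over a connected base defines a flat section of the local system of homology groups, so its members have matching classes under parallel transport, and the chain realizing the matching is literally swept out by the family. The relevant family here is the image of the universal curve. Writing $F\colon {\sf P}=\P^1\times\S\to\P^N$ for the evaluation map of \eqref{11oct2021} and restricting to the slice $(\C^n\times\C^m,0)\cong(\tilde\W,p)$ constructed above, each cycle $Z_{t,s}\subset X_t$ is the image $F(\P^1\times\{(t,s)\})$ of the degree $d$ parametrization $f_{(t,s)}$. Since a degree $d$ rational curve which is not a multiple cover is generically one to one onto its image, the injectivity hypothesis built into the definition of an analytic family of cycles guarantees that the parametrized $2$-chain $F_*[\P^1\times\{(t,s)\}]$ represents $Z_{t,s}$ as a cycle, oriented by the complex structure of $\P^1$.

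For clause~(1) I fix $t$, so the ambient threefold $X_t$ is constant while $s$ ranges over the polydisc $(\C^m,0)$, which is convex and hence path connected. Choosing a path $\gamma$ from $s_1$ to $s_2$ inside $(\C^m,0)$, the image $\delta_t:=F_*[\P^1\times\gamma]$ is a real $3$-chain in $X_t$ whose support lies in $\bigcup_{s\in(\C^m,0)}Z_{t,s}$ and whose boundary is $\partial\delta_t=Z_{t,s_2}-Z_{t,s_1}$. This simultaneously exhibits $Z_{t,s_1}$ and $Z_{t,s_2}$ as homologous and produces the homology path $\delta_t$ supported in the family of cycles. (Equivalently one may invoke the Lefschetz hyperplane theorem, giving $H_2(X_t,\Z)\cong\Z$, so that the class of a degree $d$ curve is $d$ times the generator and hence independent of $s$; the sweep is what actually manufactures the path.)

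For clause~(2) I fix $s$ and let $t$ vary over the contractible germ $(\C^n,0)\cong\tilde\V$. Because $\T$ parametrizes smooth projective varieties, the family $\{X_t\}_{t\in(\C^n,0)}$ is proper and smooth, hence $C^\infty$ locally trivial by Ehresmann's theorem; over the simply connected base this trivialization is unique up to homotopy and yields the canonical monodromy identification $H_*(X_{t_1},\Z)\cong H_*(X_{t_2},\Z)$ appearing in Property~\ref{F.France2021}. The family $\{Z_{t,s}\}_{t}$ is a continuous family of cycles inside this trivial fibration, so it is a flat section of the homology local system and the class of $Z_{t_1,s}$ transports to that of $Z_{t_2,s}$. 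Alternatively, and more robustly, the class of $Z_{t,s}$ is pinned down by the degree $\int_{Z_{t,s}}h=d$ against the hyperplane class $h$; as $h$ is globally defined over the whole family it is monodromy invariant, and since pairing with $h$ identifies $H_2(X_t,\Z)$ with $\Z$, the monodromy fixes the curve class.

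The step I expect to be the main obstacle is not this homological bookkeeping but controlling the geometry of the cycles so that the statements are meaningful. As noted after the statement, the fibers $Z_{t,s}$ may be singular and their singularity type can jump (a node degenerating to a cusp, components colliding, or the parametrization ceasing to be an embedding along a proper analytic subset of parameters). Consequently the total space $\bigcup Z_{t,s}$ is \emph{not} a fibre bundle, and one cannot argue by an ambient isotopy of the curves themselves; the entire argument must therefore be conducted at the level of homology classes, equivalently of integration currents, where only continuity of the family is used and the varying topology of individual fibres is irrelevant. The one genuinely delicate point is multiplicity: one must ensure $f_{(t,s)}$ is generically one to one onto $Z_{t,s}$, so that $F_*[\P^1\times\{(t,s)\}]$ equals $[Z_{t,s}]$ rather than a multiple of a reduced curve, and it is precisely here that the injection hypothesis in the definition of a family of cycles, together with the genericity of $X$, must be invoked.
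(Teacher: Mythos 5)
Your proposal is correct and follows essentially the same route as the paper: the paper's own (much terser) proof also works with the holomorphic family of parametrizations $f_{t,s}:\P^1\to X_t$ coming from the projection to $\S$, and deduces constancy of the class from the discreteness of $H_2(X_t,\Z)$, the swept-out chain being implicit. Your explicit construction of $\delta_t=F_*[\P^1\times\gamma]$ and your remark on generic injectivity of the parametrization simply spell out details the paper leaves unstated.
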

\begin{proof}
We have an small open smooth set $U$ of $\tilde \W$ with the coordinate system $(t,s)$. 
We get a holomorphic family  $f_{t,s}: \P^1\to \P^N, [x:y]\mapsto [f_0(x,y):\cdots:f_N(x,y)]$, where $f_i(x,y)$ are homogeneous polynomials of degree $d$ in $x,y$ depending holomorphically in $(t,s)$. Such $f_i$'s are just the ingredients of the projection of $(t,s)$ under $U\to\S$ (recall the definition of $\S$ in \ref{11oct2021}).
By definition the image of $f_{t,s}$ is $Z_{t,s}$.  We get map $f_{t,s,*}: H_2(\P ^1,\Z)\to H_2(X_t,\Z)$, and since $\Z$ is discrete, we conclude the desired property. By our considerations in Remark \ref{09.12.2021}, for fixed $t$ the family of algebraic cycles $Z_{t,s}$ is not constant in $s$.  
\end{proof}
From now on we replace $(\C^m,0)$ with a one dimensional subspace and assume that $m=1$. 
\begin{defi}\rm 
\label{3nov2021}
Let $X$ be a smooth projective variety and $Z_1,Z_2\subset X$ be two algebraic cycles. We say that $Z_1$ is strongly algebraically equivalent to $Z_2$ if we have a holomorphic family of algebraic cycles $Z_s, s\in (\C,0)$ such that 
\begin{enumerate} 
 \item 
$Z_1$ and $Z_2$ are two members of this family.
\item 
The homology path $\delta$ between $Z_{1}$ and $Z_{2}$ is supported  in the family of algebraic cycles $\cup_{s\in (\C,0)} Z_{s}\subset X$.
\end{enumerate}
\end{defi}
\begin{rem}\rm
From the first item of the above definition it follows that the homology classes of $Z_{1},Z_{2}$ in $H_*(X,\Z)$  are the same, see for instance  \cite[Proposition 19.1.1, page 373]{Fulton1998} for this implication. It is natural to expect that the second item holds automatically, however, this does not seem to be the case. The proof in the mentioned reference is not adaptable to provide this stronger statement. 
\end{rem}
Recall Property \ref{F.France2021}. For fixed $t$ and $s_1,s_2\in (\C^m,0)$, $Z_{1,t}:=Z_{t,s_1}$ and $Z_{2,t}:=Z_{t,s_2}$ are strongly algebraically equivalent. Definition \ref{3nov2021} combines topological and algebraic ingredients, whereas the the classical definition of algebraic equivalence in algebraic geometry is purely algebraic.  
It would be instructive to compare these definitions.


\begin{figure}
\begin{center}
\includegraphics[width=0.5\textwidth]{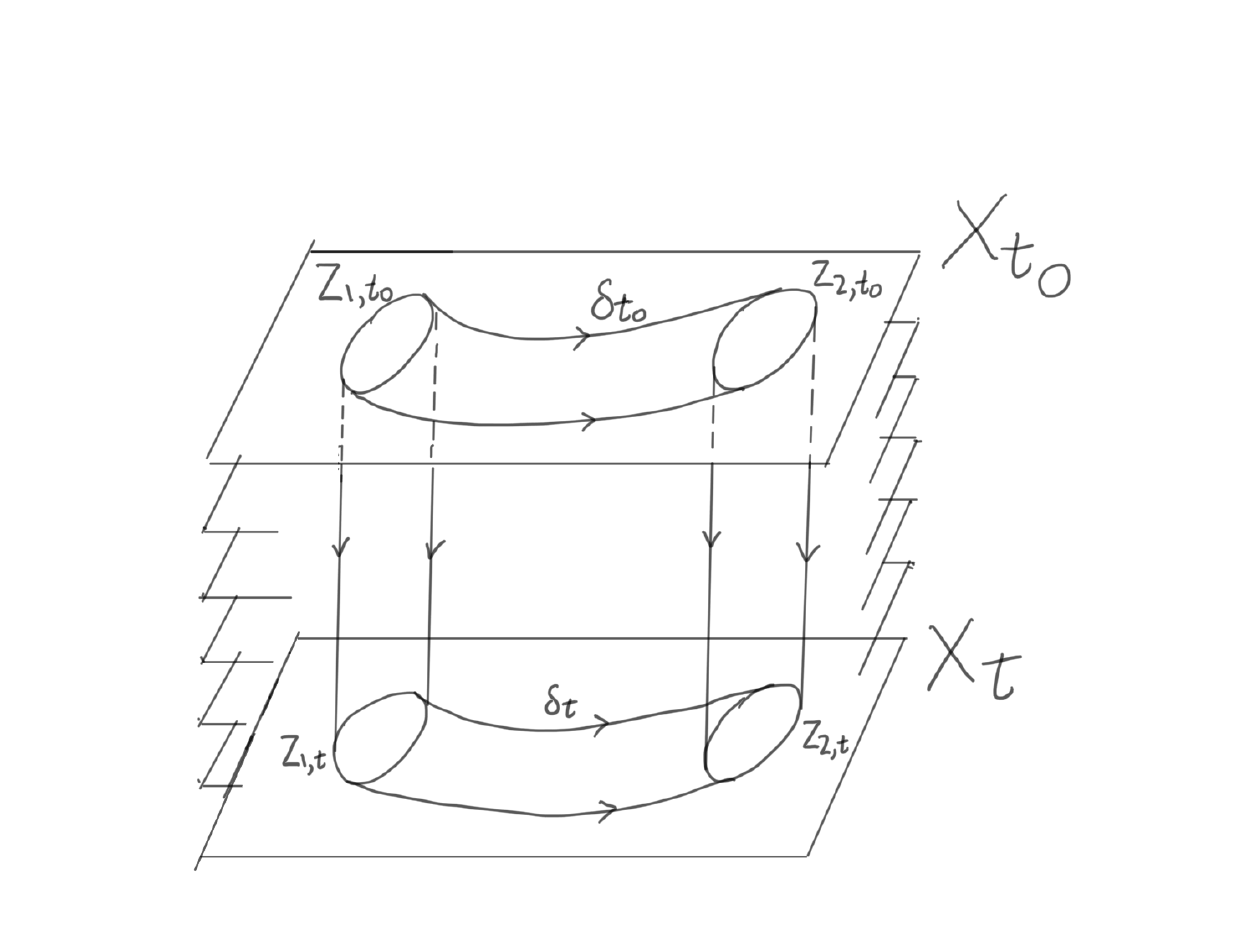}
\caption{Gauss-Manin connection }
\label{gm}
\end{center}
\end{figure}

\begin{rem}\rm
A more general definition is as follows: 
Let $X$ be a smooth projective variety and and  $Z_1,Z_2\subset X$ be two irreducible subvariety of $X$. We say that  $Z_1$ and $Z_2$ are flat algebraically equivalent if we have have holomorphic maps  $f: Z\to (\C^m,0)$ and $g: Z\to X$ such that $f$ is flat and $g$ restricted to the fibers of $f$ is an injection.
Duco van Straten kindly reminded me [January 14, 2019] 
that
``the requirement of flatness of a
cycle is not a reasonable condition at
all. Flatness is just too special. 
There is the notion of families of
cycles used by Barlet in analytic
and by Kollar in algebraic geometry[see \cite{kollar1995}, page 45-46 and \cite{Barlet1975}].
It is very ugly and algebraically hard
to use, but geometrically reasonable." 
For our purpose we need Proposition \ref{30sept2021toulouse} which is very specific to rational curves. One might expect that if two irreducible algebraic cycle $Z_1,Z_2\subset X$  are flat algebraically equivalent then they are homologically equivalent. 
It is not known to the author whether this is true for rational curves.
\end{rem}
\begin{rem}\rm
\label{6dec2021}
 We take two transversal sections $\Sigma_1$ and $\Sigma_2$ in $(\W,p)$ to the fiber of $\pi$ such that under projection $\pi$ they are biholomorphic to $(\C^n,0)$.
For instance,  $\Sigma_i:=\{s=s_i\}$ for two values $s_1$ and $s_2$ of $s$. 
We have a family $Z_{1,t}, Z_{2,t}\subset X_t,\ t\in (\C^n,0)$ such that $Z_{1,t}:=Z_{t,s_1}$ and $Z_{2,t}:=Z_{t,s_2}$ are strongly algebraically equivalent,  see Figure \ref{gm}.
\end{rem}

\section{Gauss-Manin connection}
\label{gm2021}
In this section we write down a formula which can be formulated in terms of the Gauss-Manin connection in relative cohomology, however, for the sake of simplicity and avoiding unnecessary notations, we present it as derivating integrals over non closed homology classes. 

Let us consider a projective variety $X$ of dimension $2n+1$ and two algebraic cycles $Z_1,Z_2\subset X$ of dimension $n$ such that their homology class $[Z_1]$ and $[Z_2]$ are equal in $H_{2n}(X,\Z)$, that is, $Z_1$ is homologous to $Z_2$. Let $Y=Z_1\cup Z_2$ and $\delta\in H_{2n+1}(X,Y)$ be the homology between $Z_1$ and $Z_2$, that is, it is mapped to 
$[Z_2]-[Z_1]\in H_{2n}(Y,\Z)$ under the boundary map $H_{2n+1}(X,Y)\to H_{2n}(Y)$.

We consider a one parameter holomorphic family $(X_t, Y_t),\ t\in (\C,t_0)$ of such objects, that is we have a proper submersion $\pi:\X\to (\C,t_0)$ and 
$\Y=\Zc_1\cup\Zc_2\subset \X$ with the properties of fibers as above.   
From this we mainly mean the family 
$(X_{t}, Z_{1,t}\cup Z_{2,t})$ constructed in Proposition \ref{30sept2021toulouse}, see also Remark \ref{6dec2021}.
The subindex $t$, for instance $\delta_t\in H_{2n+1}(X_t,Y_t)$, will be used in order to emphasize the dependence in $t$. For a complex manifold $M$, $\Omega^{i}_{M^\infty}$ and $\Omega^{i}_{M}$ denotes the set of global $C^\infty$ and holomorphic $i$-forms in $M$, respectively.
Let $\omega\in\Omega^{2n+1}_{\X^\infty}$ and $\alpha\in\Omega^{2n}_{\Y^\infty}$ such that 
$$
d\omega\in \pi^{-1}\Omega^1_{(\C,t_0)}\wedge\Omega^{2n+1}_{\X^\infty}, \
\omega|_{\Y}-d\alpha\in \pi^{-1}\Omega^1_{(\C,t_0)}\wedge\Omega^{2n}_{\Y^\infty}. 
$$
In the following theorem we have assumed that the integrals depends holomorphically in $t$. One can achieve this by considering algebraic de Rham cohomology and it is  the case when $X_t$'s are hypersurfaces. 
\begin{theo}
\label{7set2021bolsonaro}
We have  
\begin{equation}
\label{khodayakomak} 
\frac{\partial}{\partial t}\int_{\delta_t}\omega=\int_{\delta_t}\frac{d\omega}{dt}+
\int_{Z_{2,t}}\frac{\omega|_{\Y}-d\alpha}{dt}-\int_{Z_{1,t}}\frac{\omega|_{\Y}-d\alpha}{dt}+
\frac{\partial}{\partial t}\left(
\int_{Z_{2,t}-Z_{1,t}}\alpha\right).
\end{equation}
\end{theo}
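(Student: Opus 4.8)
The plan is to trivialize the family by a $C^\infty$ flow and then reduce the whole statement to Cartan's magic formula combined with Stokes' theorem, applied once on the relative cycle $\delta_t$ and once on the (boundaryless) cycles $Z_{1,t},Z_{2,t}$.

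First I would choose a $C^\infty$ vector field $V$ on the total space $\X$ which lifts $\frac{\partial}{\partial t}$, i.e. $d\pi(V)=\frac{\partial}{\partial t}$, and which is moreover tangent to $\Y$ and to each of its two pieces $\Zc_1,\Zc_2$. Since $\pi\colon\X\to(\C,t_0)$ is a proper submersion such a lift exists; its flow $\phi_\tau$ furnishes a $C^\infty$ trivialization carrying the fibre over $t_0$ to the fibre over $t$, the relative chain $\delta_{t_0}$ to $\delta_t$, and $Z_{i,t_0}$ to $Z_{i,t}$. Writing $\int_{\delta_t}\omega=\int_{\delta_{t_0}}\phi_{t-t_0}^*\omega$ and differentiating under the integral sign, the pulled-back form produces a Lie derivative and I obtain
\[
\frac{\partial}{\partial t}\int_{\delta_t}\omega=\int_{\delta_t}L_V\omega .
\]

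Next I would apply Cartan's formula $L_V\omega=d(\iota_V\omega)+\iota_V(d\omega)$ and split the integral. Stokes' theorem on $\delta_t$, whose boundary is $Z_{2,t}-Z_{1,t}$, gives
\[
\int_{\delta_t}d(\iota_V\omega)=\int_{Z_{2,t}}\iota_V\omega-\int_{Z_{1,t}}\iota_V\omega .
\]
For the remaining term, the hypothesis $d\omega\in\pi^{-1}\Omega^1_{(\C,t_0)}\wedge\Omega^{2n+1}_{\X^\infty}$ means precisely that $d\omega=dt\wedge\frac{d\omega}{dt}$; since $\iota_V\,dt=1$ and $\delta_t$ lies in the fibre $X_t$ (on which $dt$ pulls back to $0$), the interior product $\iota_V(d\omega)$ restricts on $X_t$ to the Gelfand--Leray form, whence $\int_{\delta_t}\iota_V(d\omega)=\int_{\delta_t}\frac{d\omega}{dt}$. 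This already yields the first term of \eqref{khodayakomak}.

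It remains to rewrite the two boundary integrals $\int_{Z_{i,t}}\iota_V\omega$. Restricting to $\Y$ and using the second hypothesis in the form $\omega|_{\Y}=d\alpha+dt\wedge\frac{\omega|_{\Y}-d\alpha}{dt}$, the tangency of $V$ to $\Y$ gives $(\iota_V\omega)|_{\Y}=\iota_V(\omega|_{\Y})$; applying $\iota_V$ and Cartan's formula once more to the term $\iota_V\,d\alpha=L_V\alpha-d(\iota_V\alpha)$, and restricting to the fibre $Z_{i,t}$, I get
\[
(\iota_V\omega)\big|_{Z_{i,t}}=\frac{\omega|_{\Y}-d\alpha}{dt}\big|_{Z_{i,t}}+(L_V\alpha)\big|_{Z_{i,t}}-d(\iota_V\alpha)\big|_{Z_{i,t}} .
\]
Because each $Z_{i,t}$ is a closed algebraic cycle, $\int_{Z_{i,t}}d(\iota_V\alpha)=0$, while the same flow argument as above identifies $\int_{Z_{i,t}}L_V\alpha=\frac{\partial}{\partial t}\int_{Z_{i,t}}\alpha$. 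Substituting these into the boundary terms and collecting everything reproduces \eqref{khodayakomak} exactly.

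The main obstacle is the construction and regularity of the lifting field $V$: it must be tangent to the possibly singular space $\Y$ and to each component $\Zc_1,\Zc_2$, whereas, as the text emphasizes, the cycles $Z_{i,t}$ may degenerate (e.g. from a node to a cusp) along the family. One therefore has to work on the smooth locus or use a stratified lift, and verify that the flow carries the chain $\delta_{t_0}$ to a chain genuinely representing $\delta_t$ with boundary exactly $Z_{2,t}-Z_{1,t}$. The holomorphic dependence of the integrals in $t$ is not proved here but assumed in the statement, being guaranteed in the algebraic de Rham framework when the $X_t$ are hypersurfaces.
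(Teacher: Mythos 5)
Your argument is correct in substance, but it proves the formula by a genuinely different route from the paper. The paper works \emph{globally in $t$ first and differentiates afterwards}: it forms the $(2n+2)$-chain $D_t=\cup_{\tau\in\gamma}\delta_\tau$ swept by $\delta$ over a path $\gamma$ from $t_0$ to $t$, observes $\partial D_t=\gamma_1+\delta_t-\gamma_2-\delta_{t_0}$ with $\gamma_i=(\pi|_{\Zc_i})^{-1}(\gamma)$, applies Stokes once to get an integrated identity for $\int_{\delta_t}\omega$, and then differentiates in $t$ using Fubini to convert $\frac{\partial}{\partial t}\int_{D_t}d\omega$ into $\int_{\delta_t}\frac{d\omega}{dt}$ and $\frac{\partial}{\partial t}\int_{\gamma_i}(\omega|_{\Y}-d\alpha)$ into the fiber integrals over $Z_{i,t}$. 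You instead differentiate \emph{first}, via a trivializing flow and Cartan's formula, and then apply Stokes fiberwise on $\delta_t$; this is the infinitesimal version of the same computation and, term by term, your $\int_{\delta_t}\iota_V d\omega$, $\int_{Z_{i,t}}\iota_V\omega$ and $\int_{Z_{i,t}}L_V\alpha$ match the paper's $\int_{\delta_t}\frac{d\omega}{dt}$, boundary terms, and $\frac{\partial}{\partial t}\int_{Z_{2,t}-Z_{1,t}}\alpha$. What each approach buys: your version yields the formula directly at each $t$ with no reference to a base point, but it hinges on the existence of a $C^\infty$ lift $V$ of $\frac{\partial}{\partial t}$ tangent to $\Y$ and to each $\Zc_i$ --- a genuine issue here, since (as the paper stresses) the fibers $Z_{i,t}$ may be singular and may even change singularity type, so $\pi|_{\Zc_i}$ is not a submersion and the lift only exists stratum by stratum; you correctly flag this but do not resolve it. The paper's chain-level argument sidesteps the construction of $V$ entirely (it only needs the swept chains as sets, plus Fubini), at the cost of being an identity of integrals rather than of forms; both proofs ultimately lean on the standing assumption that the integrals depend holomorphically on $t$. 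So: accept the proposal as an alternative proof, but either add the stratified-lift argument (work on the smooth locus of $\Y\to(\C,t_0)$ and check the singular locus contributes nothing to the integrals) or simply swap in the sweeping-chain argument at the point where you invoke tangency of $V$ to $\Y$.
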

\begin{proof}
We define $D_t$ to be the locus in $\X$ which $\delta_t$ sweeps: 
$$
D_t=\cup_{s\in [0,1]} \delta_{\gamma_t(s)}\subset \X
$$ 
such that $\gamma$ is a path in $(\C,t_0)$ 
connecting $t_0$ to $t$, see Figure \ref{gm}. 
Note that $D_t$ is oriented in a canonical way such that 
$$
\partial D_t:=(\pi|_{\Zc_1})^{-1}(\gamma)+\delta_t-(\pi|_{\Zc_2})^{-1}(\gamma) -\delta_{t_0}=
\gamma_1+\delta_t-\gamma_2-\delta_{t_0},
$$
see Figure \ref{gm}. 
By Stokes theorem we have 
\begin{eqnarray*}
\int_{\delta_t}\omega &=&\int_{D_t}d\omega+\int_{\delta_{t_0}}\omega+ \int_{\gamma_2}(\omega|_{\Zc_2}-d\alpha)-\int_{\gamma_1}(\omega|_{\Zc_1}-d\alpha)+
\int_{\gamma_2-\gamma_1}d\alpha.\\ 
\end{eqnarray*}
Taking the differential $\frac{\partial}{\partial t}$ of the above equality and using  the  Fubini's theorem we get the desired result.
\end{proof}

\section{A kind of Hodge cycle}
\label{04.12.2021-1}
For a smooth projective variety $X$, the integration of $F^{\frac{n}{2}+1}H^n_\dR(X)$ over an algebraic cycle of dimension $\frac{n}{2}$ in $X$ is zero, and this simple fact leads us to the notion of Hodge cycle and then the Hodge conjecture, see \cite[Section 5.13]{ho13-Roberto}. In this section we follow a similar argument to distinguish between algebraic equivalence of algebraic cycles in the sense of Definition \ref{3nov2021} and homology between algebraic cycles. For a pair of algebraic varieties $Y\subset X$, the relative algebraic de Rham cohomology is defined as the hypercohomology of complexes $H^m_\dR(X,Y):=\uhp^m(X, \Omega^\bullet_{X,Y})$,
where $\Omega^m_{X,Y}:=\Omega^m_X\times \Omega^{m-1}_Y$ is equipped with 
$d: \Omega^m_{X,Y}\to \Omega^{m+1}_{X,Y},\ d(\omega,\alpha):=(d\omega, \omega|_Y-d\alpha)$. 
A naive definition of the Hodge filtration $F^iH^m_\dR(X,Y):=\uhp^m(X, \Omega^{\bullet\geq i}_{X,Y})$ is not correct and it turns out that the correct definition compatible with long exact sequences is:
$F^iH^m_\dR(X,Y):={\rm Image}(\uhp^m(X, F^i\Omega^{\bullet}_{X,Y})\to \uhp^m(X, \Omega^{\bullet}_{X,Y}))$, where 
\begin{equation}
 \label{10022022}
(F^i\Omega^{\bullet}_{X,Y})^m:=\left\{
\begin{array}{cc}
 0 & m<i \\
 \Omega^m_{X} & m=i\\
 \Omega^m_{X,Y} & m>i
\end{array}\right.
,
\end{equation}
see \cite[Theorem 3.22]{SP2008}.
We will consider a good covering $\{U_i\}_{i\in I}$ of $X$ and consider elements of cohomologies relative to this covering. For more details see  \cite{ho13-Roberto}.     
\begin{prop}
\label{10.09.2021Ethan}
 Let $X$ be a smooth projective variety of dimension $2n+1$ and let $Z_1,Z_2$ be two subvarieties of $X$ of dimension $n$. If $Z_1$ and $Z_2$ are strongly algebraically equivalent (in the sense of Definition \ref{3nov2021}) then 
 \begin{equation}
 \label{10sept2021}
 \int_{\delta}F^{n+2}H^{2n+1}_\dR(X,Y)=0,
 \end{equation}
 where $\delta$ is the homology between $Z_1$ and $Z_2$. 
\end{prop}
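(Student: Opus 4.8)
The plan is to exploit the one feature that distinguishes strong algebraic equivalence (Definition \ref{3nov2021}) from mere homological equivalence: the homology $\delta$ is \emph{supported} in the family $W:=\cup_{s\in(\C,0)}Z_s\subset X$, which has complex dimension $n+1$. The idea is to push the whole period computation down onto this $(n+1)$-dimensional locus, where the piece $F^{n+2}$ of the relevant relative de Rham cohomology vanishes for trivial dimension reasons, and then to read off \eqref{10sept2021}.

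First I would set up the factorization. Let $Z$ be (a resolution of) the total space of the family, a smooth variety of dimension $n+1$ carrying $f:Z\to(\C,0)$ and $g:Z\to X$, and let $\tilde Y:=Z_1\sqcup Z_2\subset Z$ be the two boundary fibres, so that $g(\tilde Y)\subset Y$. Item~(2) of Definition \ref{3nov2021} says exactly that the class $\delta\in H_{2n+1}(X,Y)$ is the image under $g_*$ of a relative class $\tilde\delta\in H_{2n+1}(Z,\tilde Y)$ with $\partial\tilde\delta=Z_2-Z_1$; here the lift exists because $g$ is injective on the fibres $Z_s$, so the chain sweeping out $W$ lifts to $Z$. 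By the projection formula for the period pairing, for every $\eta\in H^{2n+1}_\dR(X,Y)$ one then has $\int_\delta\eta=\int_{\tilde\delta}g^*\eta$, and it suffices to prove $g^*\eta=0$ whenever $\eta\in F^{n+2}H^{2n+1}_\dR(X,Y)$.

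Next I would observe that $g^*$ is a morphism of filtered objects, so $g^*\eta\in F^{n+2}H^{2n+1}_\dR(Z,\tilde Y)$, and that this group is zero. Writing out the filtered complex $F^{n+2}\Omega^\bullet_{Z,\tilde Y}$ of \eqref{10022022}, in degree $m=n+2$ we see only $\Omega^{n+2}_Z$ (the $\tilde Y$-part is dropped by the corrected definition), and in degrees $m>n+2$ we see $\Omega^m_Z\times\Omega^{m-1}_{\tilde Y}$. Since $\dim Z=n+1$ we have $\Omega^m_Z=0$ for all $m\geq n+2$, and since $\dim\tilde Y=n$ we have $\Omega^{m-1}_{\tilde Y}=0$ for all $m-1\geq n+1$. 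Hence every term of $F^{n+2}\Omega^\bullet_{Z,\tilde Y}$ is the zero sheaf, so $F^{n+2}H^{2n+1}_\dR(Z,\tilde Y)=\uhp^{2n+1}(Z,F^{n+2}\Omega^\bullet_{Z,\tilde Y})=0$. Therefore $g^*\eta=0$ and $\int_\delta\eta=0$, which is \eqref{10sept2021}.

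The main obstacle is entirely in the first step: making the identity $\int_\delta\eta=\int_{\tilde\delta}g^*\eta$ rigorous, i.e.\ checking that the algebraically defined relative classes, computed through a good covering of $X$, pair with singular homology compatibly with pullback along $g$, and that $g^*$ genuinely respects the Hodge filtration. It is precisely for this compatibility (and for the long exact sequences) that one must use the corrected filtration \eqref{10022022} rather than the naive $\Omega^{\bullet\geq i}_{X,Y}$; the final dimension vanishing on $(Z,\tilde Y)$ is then immediate. A more hands-on alternative is to pick a $C^\infty$ representative $(\omega,\alpha)$ of $\eta$ with $\omega$ a sum of $(p,q)$-forms with $p\geq n+2$, so that $\omega$ restricts to $0$ on $W$ (which has no nonzero holomorphic directions beyond degree $n+1$) and $\alpha$ contributes nothing over the $n$-dimensional $Z_i$; but controlling the potential $\alpha$ on the singular $Y$ in this way is delicate, which is why I prefer the intrinsic vanishing of $F^{n+2}$ on the $(n+1)$-dimensional pair.
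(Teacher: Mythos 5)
Your proposal is correct and takes essentially the same approach as the paper: the paper's proof likewise reduces everything to the fact that $\delta$ is supported in the image of $g:Z\to X$ with $\dim Z=n+1$, so that $F^{n+2}H^{2n+1}_\dR(X,Y)$, which involves only forms of degree $>n+1$, pulls back to zero there. Your write-up simply makes explicit the lift $\tilde\delta$ and the vanishing $F^{n+2}H^{2n+1}_\dR(Z,\tilde Y)=0$, which the paper leaves implicit.
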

\begin{proof} 
The homology path $\delta$ between $Z_{1}$ and $Z_{2}$ is supported in the family of algebraic cycles 
$\cup_{s\in (\C,0)} Z_{s}$ which is the image of a holomorphic map $g: Z\to X$ with $\dim(Z)=n+1$.  
By definition, $F^{n+2}H^{2n+1}_\dR(X,Y)$ contains only differential $i$-forms with $i>n+1$. Therefore, the pull-back of this piece of the Hodge filtration to $Z$ is identically zero, and hence, its restriction to ${\rm Image}(g)$ and its  integration over $\delta$ is zero.   
\end{proof}
\begin{rem}\rm 
The vanishing \eqref{10sept2021} follows the same principle which has produced the Hodge conjecture, see \cite[Proposition 5.12, Section 5.13]{ho13-Roberto}. One might speculate (similar to the Hodge conjecture) that its inverse is true, that is, if $Z_1$ and $Z_2$ as above are homologous and we have \eqref{10sept2021} then they are  strongly algebraically equivalent.
Note that there might not exist closed cycles $\delta\in H_{2n+1}(X,\Q)$ such that \eqref{10sept2021} happens, in other words we might have $H^{2n+1}(X,\Q)\cap \left (H^{n,n+1}\oplus H^{n+1,n}\right)=0$.  Therefore, our  speculation does not follow from reformulation of the generalized Hodge conjecture by A.  Grothendieck. 
\end{rem}
\begin{prop}
\label{10.09.2021France}
 Under the canonical map $H^{2n+1}_\dR(X,Y)\to H^{2n+1}_\dR(X)$, the piece of Hodge filtration  $F^iH^{2n+1}_\dR(X,Y)$ is mapped isomorphically to $F^iH^{2n+1}_\dR(X)$ for $i\geq n+1$. For 
 $i\leq n$, the kernel of $F^iH^{2n+1}_\dR(X,Y)\to F^i_\dR H^{2n+1}(X)$ is one dimensional. 
\end{prop}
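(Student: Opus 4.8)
The plan is to deduce both assertions from the long exact sequence of the pair $(X,Y)$ together with the strictness of morphisms of mixed Hodge structures with respect to the Hodge filtration. First I would write down the long exact sequence of de Rham cohomology attached to the short exact sequence of complexes $0\to \Omega^{\bullet-1}_Y\to \Omega^\bullet_{X,Y}\to \Omega^\bullet_X\to 0$ underlying the definition in \eqref{10022022}:
\[
H^{2n}_\dR(X)\xrightarrow{\ r\ } H^{2n}_\dR(Y)\xrightarrow{\ \partial\ } H^{2n+1}_\dR(X,Y)\xrightarrow{\ j\ } H^{2n+1}_\dR(X)\to H^{2n+1}_\dR(Y).
\]
Since $Y=Z_1\cup Z_2$ has complex dimension $n$, hence real dimension $2n$, the last term $H^{2n+1}_\dR(Y)$ vanishes; therefore $j$ is surjective and $\ker j=\im\partial\cong\coker r$. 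This already reduces the whole statement to understanding how the Hodge filtration meets the one-dimensional piece $\im\partial$.

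Second, I would use that this is a sequence of mixed Hodge structures whose maps are strictly compatible with $F^\bullet$, which is exactly the input of \cite{SP2008} behind the definition \eqref{10022022}. Strictness yields, for every $i$, that $j$ sends $F^iH^{2n+1}_\dR(X,Y)$ onto $F^iH^{2n+1}_\dR(X)$ and that $\ker\bigl(j|_{F^i}\bigr)=F^i(\coker r)$, the induced filtration on the sub-Hodge-structure $\im\partial\subset H^{2n+1}_\dR(X,Y)$. Here I rely on the connecting map $\partial$ being a morphism of Hodge structures of bidegree $(0,0)$, so that it preserves Hodge types; this is visible directly from \eqref{10022022}, where a $Y$-form of degree $k$ lies in $F^i$ of the relative complex precisely when $k\geq i$, i.e. $F^iH^k_\dR(Y)$ maps into $F^iH^{k+1}_\dR(X,Y)$.

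Third, I would identify $\coker r$ as a Hodge structure. Under the identification $H^{2n}_\dR(Y)\cong\C^2$ given by integration over the two top-dimensional components, the restriction reads $r(\alpha)=\bigl(\int_{Z_1}\alpha,\int_{Z_2}\alpha\bigr)$. Because $[Z_1]=[Z_2]$ in $H_{2n}(X,\Z)$ the image is the diagonal, and that diagonal is nonzero (pair with a power of the hyperplane class). Hence $\coker r$ is one-dimensional and pure of type $(n,n)$, so $F^i(\coker r)=\coker r$ for $i\leq n$ and $F^i(\coker r)=0$ for $i\geq n+1$. Combining with the previous step: for $i\geq n+1$ the kernel of $j|_{F^i}$ is zero while $j|_{F^i}$ is onto, giving the isomorphism; for $i\leq n$ the kernel is one-dimensional.

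The step I expect to be the main obstacle is justifying the Hodge-theoretic input at the level of the concrete complexes of \eqref{10022022}: namely that $\partial$ is strict and of type $(0,0)$, and that, despite the singularities of the rational curves $Z_i$, the top cohomology $H^{2n}_\dR(Y)$ is genuinely pure of type $(n,n)$ so that the naive and the correct Hodge filtrations agree on the fundamental-class part (the very point flagged after \eqref{10022022}). In the case $n=1$ relevant to Theorem \ref{main2021}, this collapses to the elementary fact that an irreducible projective curve has one-dimensional top cohomology of type $(1,1)$, which bypasses the difficulty entirely.
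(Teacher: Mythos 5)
Your first two reductions coincide with the paper's own proof: it uses the same long exact sequence of the pair, gets surjectivity of $j:H^{2n+1}_\dR(X,Y)\to H^{2n+1}_\dR(X)$ from $H^{2n+1}_\dR(Y)=0$, and identifies $\ker j$ as the one-dimensional image of $H^{2n}_\dR(Y)\cong\C^2$ (generated by $x_1=-x_2$, i.e.\ your $\coker r$ computed via the diagonal). Where you genuinely diverge is the filtration step. The paper does not invoke strictness of morphisms of mixed Hodge structures at all; instead it writes a class of $H^{2n+1}_\dR(X,Y)$ as an explicit Cech hypercocycle with components $(\omega^j,\alpha^{j-1})\in C^{2n+1-j}(\U,\Omega^j_{X,Y})$ and observes that the relative components $\alpha^{j}$ vanish for $j\geq n+1$ simply because $\Omega^{j}_{Y}=0$ once $j>\dim Y=n$, while $\alpha^n=0$ for classes in $F^{n+1}$ by the very definition \eqref{10022022}; hence for $i\geq n+1$ a class in $F^i$ carries no $Y$-data and nothing is lost under $j$. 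Your route --- strictness of the exact sequence of mixed Hodge structures together with purity of $H^{2n}_\dR(Y)$ in type $(n,n)$ --- is correct and more conceptual: the strictness you need is exactly what \cite[Theorem 3.22]{SP2008} (already cited for \eqref{10022022}) supplies, and the purity of the top cohomology of an $n$-dimensional projective variety holds even for singular $Y$, so the obstacle you flag at the end is not a real one. What the paper's concrete computation buys, and what your argument does not produce, is the explicit normal form $(\omega^{n+1},\alpha^n)+(\omega^{n+2},0)+\cdots$ with $\alpha^n=0$ for classes in $F^{n+1}$; this representative is reused verbatim in Section \ref{3.10.2021} (the form $(\omega^3,0)$ and the definition of the Gelfand--Leray form $\omega/df$), so the Cech bookkeeping is not redundant even if one accepts your more abstract proof of the proposition itself.
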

\begin{proof}

We have the long exact sequence 
\begin{equation}
 H^{2n}_\dR(X)\to H^{2n}_\dR(Y)\to H^{2n+1}_\dR(X,Y)\to H^{2n+1}_\dR(X)\to H^{2n+1}(Y)=0
\end{equation}
and hence $ H^{2n+1}_\dR(X,Y)\to H^{2n+1}_\dR(X)$ is surjective. Moreover, $H^{2n}_\dR(Y)$ is of dimension $2$ generated by the cohomology classes $[Z_1]$ and $[Z_2]$. After writing the above long exact sequence in homology and using the fact that $Z_1$ and $Z_2$ are homologous in $X$ through $\delta$, we observe that the integration of the images $x_1$ and $-x_2$ of $[Z_1]$ and $-[Z_2]$ in $H^{2n+1}_\dR(X,Y)$ over $\delta$ are the same and over the image of $H_{2n+1}(X)\to H_{2n+1}(X,Y)$ are zero. This implies that  the image of $H^{2n}_\dR(Y)\to H^{2n+1}_\dR(X,Y)$ is one dimensional generated by $x_1=-x_2$.
We take a good covering $\U$ of $(X,Y)$.
It turns out that an element of $H^{2n+1}_\dR(X,Y)$ relative to this covering is of the form
\begin{equation}
\label{22sept2021}
\omega=(\omega^{0},0)+(\omega^1,\alpha^0)+\cdots+
(\omega^n,\alpha^{n-1})+(\omega^{n+1},\alpha^n)+
(\omega^{n+2},0)+\cdots+(\omega^{2n+1},0)
\end{equation}
$$
(\omega^j,\alpha^{j-1})\in C^{2n+1-j}(\U,\Omega^j_{X,Y}),
$$
We have $\alpha^{j}=0,\ \ j=n+1,\cdots, 2n$ because $\dim(Y)=n$. This implies the statement on the pieces of Hodge filtration for 
$i\geq n+2$. For $i=n+1$ we note that $\alpha^n=0$ by our definition of Hodge filtration in \eqref{10022022}.
\end{proof}
Note that using the long exact sequences of the pair $(X,Y)$ both in homology and cohomology we have 
\begin{equation}
\label{02.12.2021}
\int_{\delta}\tilde \alpha=\int_{Z_2-Z_1}\alpha, \ \ \ \ \ \forall\alpha\in H^{2n}_\dR(Y),    
\end{equation}
and $\tilde\alpha$ is the image of $\alpha$ under $H^{2n}_\dR(Y)\to H^{2n+1}_\dR(X,Y)$.  Let $Z_s, s\in (\C,0)$ be a family of algebraic cycles as in Introduction. 
We fix points $s_0,s \in (\C,0)$, and a path $\gamma$ which connects $s_0$ to $s$ in $(\C,0)$.
Let $n$ be the complex dimension of the fibers and $Y_s=Z_s\cup Z_{s_0}$. 
We consider $\omega\in \check F^{n+1} H^{2n+1}_\dR(X)$ and 
$$
I(s):=\int_{Z_{s_0}}^{Z_{s}}\omega:=\int_{f^{-1}(\gamma)}\omega.
$$
Since $\omega$ is a closed differential $2n+1$, this integral does only depends on the homotopy class of $\gamma$. By the definition of Hodge filtration, the Cech cohomology representation of $\omega\in F^{n+1}H^{2n+1}(X, Y_s)$ in \eqref{22sept2021} starts with $(\omega^{n+1},\alpha^n)$. Moreover, its  pull-back by  $g$ contains only the term $(\omega^{n+1},\alpha^n)$. By definition  of $\check F^{n+1}H^{2n+1}_\dR(X)$ we have $\alpha^n=0$ and define the Gelfand-Leray form in our context as 
$$
\frac{\omega}{df}:=\frac{\omega^{n+1}}{df}\in H^{2n}_\dR(Z_s). 
$$
\begin{prop}
\label{22s2021}
We have 
$$
I'(s)=\int_{Z_s}\frac{\omega}{df}. 
$$
\end{prop}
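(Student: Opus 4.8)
The plan is to realize $I(s)$ as a period over the homology path and differentiate it using Theorem \ref{7set2021bolsonaro}. Concretely, I apply that theorem to the constant family $X_t\equiv X$, with $Z_{1,t}:=Z_{s_0}$ held fixed and $Z_{2,t}:=Z_s$ the moving member, so that the role of the base parameter $t$ is played by $s$ and $\delta_t$ becomes the homology path $\delta_s=g_*f^{-1}(\gamma)$ between $Z_{s_0}$ and $Z_s$. With this dictionary $I(s)=\int_{\delta_s}\omega$, and the quantity to compute is $\frac{\partial}{\partial s}I(s)$. Before differentiating I record that, as explained just before the statement, the pull-back $g^*\omega$ to the $(n+1)$-dimensional variety $Z$ is governed only by the term $(\omega^{n+1},\alpha^n)$ of \eqref{22sept2021}, and that $\alpha^n=0$ because $\omega\in\check F^{n+1}H^{2n+1}_\dR(X)$; thus on each fibre the Gelfand--Leray form $\frac{\omega}{df}=\frac{\omega^{n+1}}{df}$ is a well-defined class in $H^{2n}_\dR(Z_s)$.

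Next I feed this data into \eqref{khodayakomak} and observe that three of the four terms on the right-hand side drop out. First, $\omega$ is closed, so $d\omega=0$ and $\int_{\delta_s}\frac{d\omega}{ds}=0$. Second, since $\alpha^n=0$ I may take the relative primitive $\alpha=0$; this annihilates the last term $\frac{\partial}{\partial s}\int_{Z_s-Z_{s_0}}\alpha$ and simultaneously confirms the hypothesis $\omega|_{\Y}\in\pi^{-1}\Omega^1\wedge\Omega^{2n}$ of Theorem \ref{7set2021bolsonaro}, i.e.\ that $\omega|_{\Y}$ is divisible by $ds$ (the surviving piece $\omega^{n+1}$ is of holomorphic degree $n+1=\dim_{\C}\Y$, hence necessarily contains $ds$). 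Third, the cycle $Z_{s_0}$ is literally the same subvariety of $X$ for every $s$, so $g$ restricted to the corresponding component $\Zc_1$ factors through the constant projection onto $Z_{s_0}$; hence $g^*\omega|_{\Zc_1}$ carries no $ds$, its Gelfand--Leray form vanishes, and $\int_{Z_{s_0}}\frac{\omega|_{\Y}}{ds}=0$. What remains is exactly
\[
I'(s)=\int_{Z_s}\frac{\omega|_{\Y}}{ds}.
\]

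It then remains to identify $\frac{\omega|_{\Y}}{ds}$ restricted to $Z_s$ with the class $\frac{\omega}{df}\in H^{2n}_\dR(Z_s)$ fixed above. This is transparent from the fibre-integration viewpoint, which I would use as a sanity check and, if necessary, as an independent derivation: writing $f_*$ for integration over the fibres of the proper map $f$, one has $I(s)=\int_{\gamma}f_*(g^*\omega)$, the $1$-form $f_*(g^*\omega)$ is closed because $df_*=f_*d$ and $\omega$ is closed, and differentiating the endpoint gives $I'(s)=\int_{Z_s}\iota_{\tilde v}\,g^*\omega$ for any lift $\tilde v$ of $\partial_s$ with $df(\tilde v)=1$. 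Since $g^*\omega$ reduces on $Z$ to the $(n+1,n)$-form $\omega^{n+1}$, writing $\omega^{n+1}=df\wedge\frac{\omega^{n+1}}{df}$ gives $\iota_{\tilde v}\,\omega^{n+1}|_{Z_s}=\frac{\omega^{n+1}}{df}|_{Z_s}=\frac{\omega}{df}$, which matches real degrees (a $2n$-form on the $2n$-real-dimensional fibre) and yields $I'(s)=\int_{Z_s}\frac{\omega}{df}$.

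The step I expect to be the genuine obstacle is this last identification, carried out honestly at the level of representatives: reconciling the \v{C}ech--de Rham datum of $\omega$ (a holomorphic $(n+1)$-form of \v{C}ech degree $n$, together with the vanishing of $\alpha^n$) with the smooth $(n+1,n)$-form whose Gelfand--Leray form the general formula \eqref{khodayakomak} produces, and checking that the resulting class in $H^{2n}_\dR(Z_s)$ is independent of the choices made modulo $df$. The remaining analytic points---commuting $\frac{\partial}{\partial s}$ with fibre integration (Fubini) and the holomorphic dependence of $I$ on $s$---are routine once the properness of $f$ assumed in the definition of an analytic family of cycles is invoked.
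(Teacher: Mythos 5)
Your argument is correct, but you have inverted the paper's emphasis: the two-line Fubini computation that you relegate to a ``sanity check'' at the end of your third paragraph \emph{is} the paper's entire proof. The paper simply writes $I(s)=\int_{f^{-1}(\gamma)}\frac{\omega}{df}\wedge ds=\int_{\gamma}\left(\int_{Z_s}\frac{\omega}{df}\right)ds$ and differentiates the upper endpoint; the Gelfand--Leray class $\frac{\omega}{df}=\frac{\omega^{n+1}}{df}$ has already been pinned down in the paragraph preceding the proposition (only the term $(\omega^{n+1},\alpha^n)$ survives pull-back to $Z$, and $\alpha^n=0$ by the definition of $\check F^{n+1}$), so the ``genuine obstacle'' you anticipate --- reconciling representatives --- is dealt with before the proposition is even stated, not inside its proof. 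Your primary route through Theorem \ref{7set2021bolsonaro} (constant family $X_t\equiv X$, frozen $Z_{1,t}=Z_{s_0}$, moving $Z_{2,t}=Z_s$) is legitimate and your accounting of which terms of \eqref{khodayakomak} vanish is right, but it buys you nothing: the one surviving term $\int_{Z_s}\frac{\omega|_{\Y}}{ds}$ arises in the proof of that theorem precisely from Fubini applied to the boundary trace $(\pi|_{\Zc_2})^{-1}(\gamma)\cong f^{-1}(\gamma)$, so after the detour you must still perform the identification with $\frac{\omega}{df}$ that the direct argument starts from. The detour also imports an extra burden the direct proof avoids, namely that Theorem \ref{7set2021bolsonaro} is stated for global $C^\infty$ forms on the total space while $\omega$ is handed to you as a \v{C}ech--Dolbeault cocycle, and that $\delta_s$ here is not a pre-existing relative cycle at each parameter value but one whose boundary endpoint moves with $s$ (with $\delta_{s_0}$ degenerate), so the sweeping argument needs a small re-check. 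None of this is an error --- it is just a longer road to the same integral.
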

\begin{proof}
We use Fubini's theorem:
$$
I(s)=\int_{f^{-1}(\gamma)}\frac{\omega}{df}\wedge ds=\int_{\gamma} \left(\int_{Z_s}\frac{\omega}{df} \right)ds.
$$
\end{proof}
\begin{rem}\rm
If instead of $(\C,0)$ we use  an algebraic (compact) curve $C$, $I(s)$ is a multi valued 
function with branching points in the set of atypical values of $f$. We denote it by  $A\subset C$ (it also includes the singularities of $C$). The difference of two branches of $I$ is the period $\int_{f^{-1}(\gamma)}\omega$, where $\gamma$ is a closed path in $C\backslash A$.
In this case Proposition \ref{22s2021} implies that  $I'(s)$ is a meromorphic function on $C$. 
\end{rem}

 \section{Proof of Theorem \ref{main2021}}
 \label{3.10.2021}
 We will use Griffiths description of the cohomology of hypersurfaces in \cite{gr69}.
 Let $\T$ be the parameter space of smooth hypersurfaces of degree $d$ in $\P^{2n+2}$, $F=\sum_{t\in\T} t_{\alpha}x^\alpha$ be the universal homogeneous polynomial of degree $d$ in $x=(x_0,x_1,\cdots,x_{2n+2})$ and $\X:=\{F=0\}\subset \P^{2n+2}\times \T$.  We define
 $$
 \omega_P=\Resi \frac{P\Omega}{F^k},\ \ \deg(P)=k\cdot d-(2n+3),\ \ \Omega:=\sum_{i=0}^{2n+2}(-1)^{i}x_i\widehat{dx_i}
 $$
 For the case $(2n+3)|d$ we also define $\omega:=\omega_1$ ($1$ refers to the constant polynomial $1$). This gives us $\omega_P\in H^{2n+1}(X)$, and since $H^{2n+1}_\dR(X,Y)\to H^{2n+1}_\dR(X)$ is surjective it comes from an element in $H^{2n+1}_\dR(X,Y)$ which we denote it again by $\omega_P$. By Proposition \ref{10.09.2021France}, for $\deg P=kd-(2n+3),\  k\leq n+1$ this element is unique as 
 $F^{n+1}H^{2n+1}_\dR(X,Y)\to F^{n+1}H^{2n+1}_\dR(X)$ is an isomorphism.
 
 Consider the case $n=1$ and $d=5$. Assume that Clemens' conjecture is not true. It follows that for a component $\tilde\W$ of $\W$, the map $\pi:\tilde \W\to\T$ in Section \ref{Hilbert2021} is surjective, and hence, $\V=\T$. Moreover, the irreducible components of fibers of $\pi$ have dimension $\geq 3$ (recall that they are ${\rm PSL }(2,\C)$-invariant).  We take $t_0\in\T$ which is still a generic point of $\T$, and we cannot assume that  $X_{t_0}$ is the Fermat variety. 
 We get a holomorphic family of algebraic curves $Z_{1,t}, Z_{2,t}\subset X_t,\ \ t\in (\T,0)$ such that $Z_{1,t}$ and $Z_{2,t}$ are  strongly algebraically equivalent. We use Proposition \ref{10.09.2021Ethan} and we get 
 \begin{equation}
 \label{7.9.2021}
 \int_{\delta}\omega=0. 
 \end{equation}
 Note that by Proposition \ref{10.09.2021France}, $F^3H^3(X,Y)$ is one dimensional and it is generated by $\omega$. 
 We write $\omega$ in a covering and  it is of the form $(\omega^3,0)$, where $\omega_3=\{\omega_{3,i}\}$.
 Note that the element $\omega^3$ restricted to $\Y$ is identically zero, and hence in Theorem \ref{7set2021bolsonaro} the corresponding $\alpha$ can be taken $0$. Since $\omega$ is $D_{X,Y}$-closed, 
 we have  $d\omega=dt\wedge *$ and $\delta\omega=dt\wedge *$.   
 Consider a derivation $\frac{\partial}{\partial t}$ in $\T$, for instance take $F=F_0+tx^i$, where $x^i$ is a homogeneous monomial of degree $d$ in $x$. Since $\tilde\W\to\T$ is surjective, we can have such derivations for all monomials $x^i$ of degree $d$.  We have 
 \begin{equation}
  d\left(  \frac{P\Omega}{F^k}  \right)=dt\wedge   \frac{-k\frac{\partial F}{\partial t}P\Omega}{F^{k+1}}
 \end{equation}
 where $d$ is the differential in the variety $(\P ^4\times \T)\backslash \X$. This can be seen easily after passing to an affine coordinate. Taking residue we have
 $$
 d\omega_P=dt\wedge \omega_{-k \frac{\partial F}{\partial t}P}
 $$
We make the derivation of \eqref{7.9.2021} with respect to $t$, use  Theorem  \ref{7set2021bolsonaro} and conclude that
 \begin{equation}
 \label{10012022}
 \int_{\delta}\omega_{P}=0,\ \ \forall P\in\C[x]_5. 
 \end{equation}
 In the Cech cohomology representation of $\omega_P$ as above, it is of the form $(\omega^3,0)+(\omega^2,0)$ because 
 $$
 D_{X,Y}(\omega^3,0)=(d\omega^3,0)+(\delta\omega^3,0).
 $$ 
 Therefore, $\omega_P$ is in $F^2 H^3_\dR(X)$. 
 We use Proposition \ref{22s2021} and conclude that  $\int_{Z_s}\frac{\omega_P}{ds}=0$.  
 \begin{rem}\rm
 A more constructive way to prove Clemens' conjecture, is to give an example of a smooth quintic threefold with isolated  rational curves.
 In this paper we have generalized a proof of Noether's theorem, and T. Shioda in \cite{Shioda1981a} has given an explicit example of a surface of prime degree and 
 with Picard rank equal to one, and hence, he has given a constructive proof of Noether's theorem. The following quintic is a natural generalization of Shioda's example
 $$
x_0^d+x_1x_2^{d-1}+x_2x_3^{d-1}+x_3x_4^{d-1}+x_4x_1^{d-1}=0,\ \ d=5, 
$$
and since $5$ is a prime number, we may expect that Clemens' conjecture holds for this quintic
 \end{rem}
\section{Computing period of curves}
\label{04.12.2021-2}
In order to gather evidences that the periods \eqref{3oct2021} do not vanish identically in $s$, one may try to compute them for well-known families of rational curves inside smooth quintic threefolds, see for instance \cite{Straten2012, Walcher2012, Mustata2021}. In this section, we describe how to compute the period \eqref{3oct2021} and perform this for one family of rational curves. 

Let $X\subseteq \P^{m+1}$ be a smooth degree $d$ hypersurface given by $X=\{F=0\}$ and
$$
\Omega=\sum_{i=0}^{m+1}(-1)^ix_idx_0\wedge\cdots\widehat{dx_i}\cdots\wedge dx_{m+1}.
$$
Let $P$ be a polynomial of degree $d(q+1)-m-2)$ with $q\in \{0,1,\ldots,m\}$. 
By the computations in 
\cite{CarlsonGriffiths1980} we know that 
\begin{equation}
\label{5.9.2018.2}
{\rm Resi} \left(\frac{P\Omega}{F^{q+1}}\right)=\frac{(-1)^{m}}{q!}\left\{\frac{P\Omega_J}{F_J}\right\}_{|J|=q}\in H^q(\U,\Omega_{X}^{m-q}),
\end{equation}
where $\Omega_J:=\iota_{\frac{\partial}{\partial x_{j_q}}}(\cdots\iota_{\frac{\partial}{\partial x_{j_0}}}(\Omega)\cdots)$,  $F_J:=F_{j_0}\cdots F_{j_q}$, $F_j:=\frac{\partial F}{\partial x_i}$, $j_0<j_1<\cdots<j_q$ and $\U=\{U_i\}_{i=0}^{m+1}$ is the Jacobian covering restricted to $X$ given by $U_i=\{F_i\neq 0\}\cap X$.
We have also 
$$
\Omega_J=(-1)^{j_0+j_1+\cdots+j_q+\binom{q+2}{2}}\sum_{l=0}^{m-q}(-1)^lx_{k_l}\widehat{dx_{k_l}},
$$
where $k_0<k_1<\cdots<k_{m-q}$ is obtained form $0,1,2,\ldots,m+1$ by removing $j_0,j_1,\ldots,j_q$, see for instance \cite{Roberto2022}. 
We apply this theorem for the Fermat variety of dimension $m=3$ and degree $d=5$ and with $q=1$. In this case the Jacobian covering is the usual covering by $U_i:=\{x_i\not=0\}$. For a homogeneous polynomial $P$ of degree $5$ up to constant independent of $X$,  we have 
$$
\omega_P:={\rm Resi} \left(\frac{P\Omega}{F^{2}}\right)=\left\{\sum_{j_2\not= j_0,j_1 }\frac{(-1)^{j_2^*}Px_{j_2}}{x_{j_0}^4x_{j_1}^4}dx_{j_3}\wedge dx_{j_4}\right\}
\in H^1(\U,\Omega_{X}^{2}),
$$
where the sum runs over $j_2\in \{0,1,\ldots,4\}\backslash\{j_0,j_1\}$ with $\{j_0,j_1,\ldots,j_4\}=\{0,1,\ldots,4\}$, and $j_2^*$ is the position of $j_2$ in $0,1,2,3,4$ with $j_0,j_1$ removed (counting from zero).  Let $[x_0(s,t):x_1(s,t):\cdots: x_4(s,t)],\ \ s\in(\C,0),\ t\in\P^1$ be a family of rational curves 
inside the Fermat quintic threefold.  We write $x_i(s,t)=x_i(t)+sy_i(t)+O(s)$ and 
$dx_{j_3}\wedge dx_{j_4}=(x_{j_3}'y_{j_4}-x_{j_4}'y_{j_3})dt\wedge ds+O(s)$. We conclude that
$$
\int_{Z_s}\frac{\omega_P}{ds}=\sum_{j_0,j_1}
{\rm Resi}_{x_{j_0}=0}\left(\frac{P}{x_{j_0}^4x_{j_1}^4}
\sum_{j_2\not= j_0,j_1} (-1)^{j_2^*}x_{j_2}(x_{j_3}'y_{j_4}-x_{j_4}'y_{j_3} )dt
\right)
$$
where ${\rm Resi}_{x_{j_0=0}}$ refers to sum of the residues around  the points of $x_{j_0}=0$ in $\P^1$. Note that for $x_i=x_i(\frac{at+b}{ct+d})$ with $a,b,c,d$ depending  on $s$, 
the expression $ x_{j_2}'y_{j_3}-x_{j_3}'y_{j_2} $ is identically zero because $
\frac{\partial}{\partial s}x_i(s,t)=x_i'(t)\frac{\partial}{\partial s}\left( \frac{at+b}{ct+d}\right)$. This is natural because the action of ${\rm PSL}(2,\C)$ on $\P^1$ does not produce families of rational curves. Note also that $y_0x_0^4+y_1x_1^4+\cdots+y_4x_4^4$.  
Families of lines in the Fermat quintic fourfold has been studied in \cite{AlbanoKatz}. It has $50$ one dimensional families of the form 
$$
[x:y]\mapsto [x: -\zeta y: ay:by:cy], \ \zeta^5=1,\ a^5+b^5+c^5=0. 
$$
We set $x=t, y=1$ and write this as $x(t,s)=[t: -\zeta: 1:s:(-s^5-1)^{\frac{1}{5}}],\ s\in (\C,0)$. We have $y=[0:0:0:1:0], x'=[1:0:0:0:0]$ and so the only non-trivial possibility for $(j_3,j_4)$ is $(0,3)$. Therefore, we  have $(j_0,j_1,j_2)=(1,2,4),(1,4,2),(2,4,1)$ and hence up to some $2\pi i$ factor 
$$
\int_{Z_s}\frac{\omega_P}{ds}=
{\rm Resi}_{x_1=0}\left(\frac{P}{x_1^4x_2^4}x_4dt
\right)+
{\rm Resi}_{x_1=0}\left(\frac{P}{x_1^4x_4^4}(-x_2)dt
\right)+
{\rm Resi}_{x_2=0}\left(\frac{P}{x_{2}^4x_{4}^4}(-x_1)dt
\right).
$$
The third residue is zero as $x_2=-\zeta$ is constant. We set $P=x_1^3x_2^2$ and as $x_2$ and $x_4$ are independent of $t$ and we have
$$
\int_{Z_s}\frac{\omega_P}{ds}=\frac{x_4}{x_2^2}-\frac{x_2^3}{x_4^4}=
\frac{(-s^5-1)^{\frac{1}{5}} }{\zeta^2}+\frac{\zeta^3}{(-s^5-1)^{\frac{4}{5}} }.
$$
It would be also nice to make the computations for the following  family of conics described in \cite{Mustata2021}:
$$
a^2(x_0 +x_1)=b^2(x_2 +x_3),\ 
bx_4 =c(x_0 +x_1),\ b(x_0^2+x_1^2)=\pm ia(x_2^2 +x_3^2)=0, a^{10}+b^{10}-4b^{5}c^5=0. 
$$

\section{Abel-Jacobi map}
\label{10jan2022}
\def\Jac{{\rm Jac}}
Let $Z_s,\ s\in \S$ be a family of curves in a smooth threefold $X$. We fix a curve $Z_{s_0}$ and the Abel-Jacobi map is defined as 
$$
\S\to \Jac(X):=\frac{(F^2H^3_\dR(X))^\vee}{H_3(X,\Z)},\ s\mapsto  \int_{Z_{s_0}}^{Z_s}F^2H^3_\dR(X),
$$
where the integration takes over the homology $\delta$ between $Z_{s_0}$ and $Z$: $\partial \delta=Z_s-Z_{s_0}$. For a proof of the existence of such a homology path see \cite[Proposition 19.1.1, page 373]{Fulton1998}, however, note that from this proof it is not clear whether $\delta$ is supported in $\cup_{s\in \S}Z_s$ or not.   If the parameter space $\S=\P^1$ is the rational curve, it is well-known that there is no non constant holomorphic map from $\P^1$ to a complex compact tori, and hence, the Abel-Jacobi map is zero in this case. This implies that for some homology $\delta$ between $Z_{s_0}$ and $Z_s$ we have 
in $\int_{\delta}F^2=0$, but not necessarily $\delta$ is supported in $\cup_{s\in \P^1}Z_s$.  
There are examples of this situation for which we have this stronger statement.
\begin{prop}
Let $Z\subset X$ be an irreducible divisor  in $X$ with a singular set of codimension $\geq 2$ and such that a desingularization $\tilde Z$ of $Z$ satisfies $H_1(\tilde Z,\Q)=0$. Then for any rational function $f$ in $Z$ and family of curves $Z_s:=f^{-1}(s),\ s\in\P^1$ we have $\int_{Z_{s_0}}^{Z_s}F^2=0$, where the integration takes place over $f^{-1}(\gamma)$ and $\gamma$ is any path in $\P^1$ connecting $s_0$ to $s$.  
\end{prop}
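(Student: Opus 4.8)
The plan is to pull the entire configuration back to a smooth surface and to exploit the drop in Hodge type that $\omega$ undergoes when restricted to something of complex dimension $2$. First I would pass to the desingularization $\sigma\colon \tilde Z\to Z$; since $f$ is only a rational function it is a priori a rational map $\tilde Z\dashrightarrow \P^1$, so I would blow up its finitely many points of indeterminacy to obtain a genuine morphism $f\colon \tilde Z\to\P^1$ whose fibres are the proper transforms of the curves $Z_s$. Blowing up points only enlarges $H_2$, so the hypothesis $H_1(\tilde Z,\Q)=0$ is preserved. I would then represent $\omega\in F^2H^3_\dR(X)=H^{30}\oplus H^{21}$ by a $C^\infty$ form of pure type $(3,0)+(2,1)$, namely its harmonic representative. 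Its restriction $\eta:=\omega|_{\tilde Z}$ is a closed form of type $(2,1)$, because the $(3,0)$ part dies on a surface ($\dim_\C\tilde Z=2$). As $\sigma$ is an isomorphism away from the finite singular locus of $Z$ (here the codimension $\geq 2$ hypothesis is used) and $\gamma$ can be taken generic, the integral over $f^{-1}(\gamma)$ is unchanged, i.e. $\int_{f^{-1}(\gamma)}\omega=\int_{f^{-1}(\gamma)}\eta$ computed on $\tilde Z$.

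The key step is a Hodge-theoretic vanishing on $\tilde Z$. Since $\tilde Z$ is a smooth projective, hence K\"ahler, surface with $b_3=b_1=0$ by Poincar\'e duality, and $H^3(\tilde Z,\C)=H^{2,1}\oplus H^{1,2}$ with $h^{2,1}=h^{1,2}$, we get $h^{2,1}(\tilde Z)=\frac{1}{2}b_3=0$. The form $\eta$ is of pure type $(2,1)$ and $d$-closed; on a surface $\partial\eta$ has type $(3,1)$ and so vanishes automatically, whence $\bar\partial\eta=0$ and $\eta$ defines a class in $H^{2,1}_{\bar\partial}(\tilde Z)=0$. Therefore $\eta$ is $\bar\partial$-exact: $\eta=\bar\partial\beta$ with $\beta$ of type $(2,0)$. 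Since $\partial\beta$ is a $(3,0)$-form on a surface it vanishes, so in fact $\eta=d\beta$.

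It remains to apply Stokes. The $3$-chain $f^{-1}(\gamma)$ satisfies $\partial f^{-1}(\gamma)=Z_s-Z_{s_0}$, so $\int_{f^{-1}(\gamma)}\eta=\int_{f^{-1}(\gamma)}d\beta=\int_{Z_s}\beta-\int_{Z_{s_0}}\beta$. But $\beta$ has type $(2,0)$ and each $Z_s$ is a complex curve, so $\beta|_{Z_s}=0$ identically and both terms vanish. Hence $\int_{Z_{s_0}}^{Z_s}\omega=0$ for every $\omega\in F^2H^3_\dR(X)$, which is the assertion. I note in passing that this is precisely the mechanism of Proposition \ref{10.09.2021Ethan}, pushed one step further: there the relevant piece pulled back to zero on the support, here it pulls back to a $d$-exact form with a primitive that itself restricts to zero on the fibres.

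The main obstacle is not the cohomological vanishing, which is immediate once $b_1=0$, but the geometric bookkeeping required to transport the integral faithfully to $\tilde Z$. One must verify that resolving the singularities of $Z$ (a finite set of points) together with the indeterminacy of $f$ leaves $\int_{f^{-1}(\gamma)}\omega$ unchanged, that the resulting fibres $Z_s$ are genuine closed cycles so that Stokes applies, passing to the normalization of a possibly singular or reducible $Z_s$ when necessary, and that fixing the harmonic representative of $\omega$ is the correct convention so that the quantity integrated is an honest number and not merely a class modulo the period lattice $H_3(X,\Z)$.
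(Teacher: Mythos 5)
Your proof is correct, but it takes a genuinely different route from the paper's, and the difference is instructive. The paper pulls $\omega$ back to the desingularization $\tilde Z$, notes that $H^3_\dR(\tilde Z)\cong H^1_\dR(\tilde Z)=0$ by hard Lefschetz, writes $\pi^*\omega=D\eta$ for a Čech--de Rham $2$-cochain $\eta$ with no control on its Hodge type, and after Stokes is left with $\int_{\tilde Z_s}\eta-\int_{\tilde Z_{s_0}}\eta$; it then disposes of these boundary terms by observing that $s\mapsto\int_{\tilde Z_s}\eta$ is a holomorphic function on the compact base $\P^1$, hence constant. You instead use only the Dolbeault vanishing $H^{2,1}_{\bar\partial}(\tilde Z)=0$ (equivalent input: $b_1=0$, via $b_3=b_1$ or Serre duality) to produce a primitive $\beta$ of pure type $(2,0)$, so that the boundary terms $\int_{Z_s}\beta$ die identically for type reasons on a complex curve. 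What your version buys: it never uses compactness of the base, so it proves the stronger statement that $\int_\delta\omega=0$ for \emph{any} $3$-chain $\delta$ in $\tilde Z$ whose boundary is a combination of algebraic curves, and it sidesteps the question of why the fiber integral of $\eta$ is holomorphic (including at the critical values of $f$), which the paper's proof glosses over. What the paper's version buys: it is softer, requiring no choice of harmonic representative and no bookkeeping of Hodge types of primitives. Your handling of the indeterminacy locus of $f$ by blowing up, with the remark that point blow-ups preserve $H_1=0$, is also more careful than the paper, which simply writes $\tilde f=f\circ\pi$. The one point you rightly flag but do not fully resolve --- that $\int_{Z_{s_0}}^{Z_s}\omega$ over a chain with boundary depends on the chosen closed representative of the class $\omega$ --- is a genuine ambiguity in the statement itself, shared by the paper's proof, and is resolved there only implicitly via the canonical lift $F^{2}H^{3}_\dR(X,Y)\cong F^{2}H^{3}_\dR(X)$ of Proposition \ref{10.09.2021France}; it is not a defect specific to your argument.
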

\begin{proof}
 The pull-back  $\pi^*\omega$ of $\omega\in H^3_\dR(X)$ by the desingularization map $\pi: \tilde Z\to Z\subset X$ is an element in $H^3_\dR(\tilde Z)$ which in turn by hard Lefschetz theorem is isomorphic to $H^1_\dR(\tilde Z)$. By our hypothesis this is zero and so $\pi^*\omega=D\eta$, for some $2$-cocycle $\eta\in \oplus_{i=0}^2 C^i(\U,\Omega^{2-i}_{\tilde Z})$. Here, we are using the notation of hypercohomology relative to a good covering $\U$ of $\tilde Z$.  Let $\tilde f=f\circ\pi$ and $\tilde Z_s:=\tilde f^{-1}(s)$. Since a desingularization is a biholomorphism over a Zariski open subset and no component of a fiber of $f$ is inside the singular set of $Z$, it is enough to prove that $\int_{\tilde Z_{s_0}}^{\tilde Z_s}F^2=0$. By Stokes theorem we have 
 $$
 \int_{\tilde Z_{s_0}}^{\tilde Z_s}\pi^*\omega=\int_{\tilde Z_s}\eta-
 \int_{\tilde Z_{s_0}}\eta
 $$
 The map $\P^1\to \C, s\mapsto \int_{Z_s}\eta$ is holomorphic, and hence, constant.  
\end{proof}
\begin{rem}\rm
Note that if $\dim(X)>2$ then by the long exact sequence of $0\to\Z\to \O_X\to\O_X^*\to 0 $ and $H^1(X,\O_X)=H^2(X,\O_X)=0$, we know that $H^1(X, \O_X^*)\cong\Z$ and any divisor is a hypersurface section (not necessarily transversal section) of $X$. By Lefschetz theorem a transversal (and hence smooth) hyperplane section $Z$ of $X$ satisfies $H_1(Z,\Q)=0$. \end{rem}

\begin{rem}\rm 
In the proof of Theorem \ref{main2021} we have the vanishing integral   \eqref{10012022} for $\delta$ which is supported in the family of cycles $Z_s,\ s\in\C$. This implies that in this context the Abel-Jacobi map is zero. Theorem \ref{main2021}  is stronger than this. 
\end{rem}

\def\cprime{$'$} \def\cprime{$'$} \def\cprime{$'$} \def\cprime{$'$}

\end{document}